\newtheorem{thm}{Theorem}[section]
\newtheorem{lem}[thm]{Lemma}
\newtheorem{con}[thm]{Conjecture}
\newtheorem{pro}[thm]{Problem}
\begin{document}

\title{Berge-Fulkerson coloring for infinite families of snarks}

\author{Ting Zheng, Rong-Xia Hao\footnote{Corresponding author, rxhao@bjtu.edu.cn}\\
Department of Mathematics, Beijing Jiaotong University,\\
 Beijing, 100044, P.R.China}

\maketitle

\begin{abstract}
It is conjectured by Berge and Fulkerson
that {\em every bridgeless cubic graph
 has six perfect matchings such that each edge is contained in
exactly two of them.} H$\ddot{a}$gglund constructed two graphs Blowup$(K_4, C)$ and Blowup$(Prism, C_4)$.
Based on these two graphs, Chen constructed infinite families of bridgeless cubic graphs $M_{0,1,2, \ldots,k-2, k-1}$ which is obtained from cyclically 4-edge-connected and having a Fulkerson-cover cubic graphs $G_0,G_1,\ldots, G_{k-1}$ by recursive process. If each $G_i$ for $1\leq i\leq k-1$ is a cyclically 4-edge-connected snarks with excessive index at least 5, Chen proved that these infinite families are snarks. He obtained that each graph in $M_{0,1,2,3}$ has a Fulkerson-cover and gave the open problem that whether every graph in $M_{0,1,2, \ldots,k-2, k-1}$ has a Fulkerson-cover. In this paper, we solve this problem and prove that every graph in $M_{0,1,2, \ldots,k-2, k-1}$ has a Fulkerson-cover.

{\bf Keyword}: Berge-Fulkerson conjecture, Fulkerson-cover, perfect matching.

{\bf MSC(2010)}: Primary: 05C70; Secondary: 05C75, 05C40, 05C15.
\end{abstract}

\section{Introduction}

Let $G$ be a simple graph with vertex set $V(G)$ and edge set $E(G)$.
A {\em circuit} of $G$ is a $2$-regular connected subgraph. An {\em even graph} is a graph with even degree at every vertex.
 A perfect matching of $G$ is a 1-regular spanning subgraph of $G$. The excessive index of $G$, denoted by $\chi_e'(G)$, is the least integer $k$, such that $G$ can be covered by $k$ perfect matchings. A cubic graph is a {\em snark} if it is bridgeless and not $3$-edge-colorable. A cubic graph $G$ is {\em Berge-Fulkerson colorable} if $2G$ is $6$-edge-colorable.
It is an equivalent description of the Berge-Fulkerson conjecture.

  The following is a famous open problem called Berge-Fulkerson conjecture.

\begin{con}
{\rm (Berge-Fulkerson Conjecture \cite{D}, or see \cite{S})}
Every bridgeless cubic graph has six perfect matchings such that each edge belongs to exactly two of them.
\end{con}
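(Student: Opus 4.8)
The plan is to attack the general conjecture by first isolating its genuinely hard core and then seeking a structural induction, while being candid that the last step is exactly where the problem has stood open for five decades. First I would record the standard reduction: if $G$ is a bridgeless cubic graph that is $3$-edge-colorable, with color classes $M_1,M_2,M_3$, then the multiset $\{M_1,M_1,M_2,M_2,M_3,M_3\}$ is already a Fulkerson-cover, since every edge lies in exactly one class and hence in exactly two of the six matchings. Thus it suffices to prove the statement for snarks. I would then invoke the well-known fact that one may further restrict to cyclically $4$-edge-connected snarks: a bridge contradicts bridgelessness, while a $2$-edge-cut or a nontrivial $3$-edge-cut lets one split $G$ into strictly smaller bridgeless cubic graphs, solve each piece by the inductive hypothesis, and recombine the covers across the cut by matching up the colors on the cut edges. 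This is precisely the spirit of the recursive gluing of H\"agglund-type blocks used in the family $M_{0,1,\ldots,k-1}$ treated above.

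Next I would reformulate the target polyhedrally. By Edmonds' perfect-matching-polytope theorem, the all-$\tfrac13$ vector lies in the perfect matching polytope of every bridgeless cubic graph, so the identity $2\cdot\mathbf 1=\sum_i\lambda_i\,\chi_{M_i}$ holds \emph{fractionally} over perfect matchings $M_i$ with $\lambda_i\ge 0$. The conjecture asks for the much stronger \emph{integral} decomposition $2\cdot\mathbf 1=\sum_{i=1}^{6}\chi_{M_i}$ into exactly six perfect matchings. I would try to realize this integral decomposition by constructing the six matchings explicitly: for instance, starting from a $2$-factor $F$ of $G$ with as few and as short odd circuits as possible, and then building matchings inside and outside $F$ so that each edge is covered exactly twice. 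Equivalently one may target the Berge version (five matchings covering every edge), which Mazzuoccolo showed is equivalent to the Fulkerson version and which sometimes leaves more room to maneuver on the odd circuits of $F$.

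The core of any complete proof must then be an induction over \emph{all} cyclically $4$-edge-connected snarks, and this is where the real obstacle lies. The construction underlying $M_{0,1,\ldots,k-1}$ shows that a specific gluing operation preserves Fulkerson-colorability, and the natural hope is that \emph{every} snark decomposes, via cyclic $4$-edge-cuts or through a finite list of reducible local configurations, into base pieces already known to admit a Fulkerson-cover, so that the induction closes. The hard part is precisely that no such structure theorem exists: snarks admit no finite forbidden-configuration characterization analogous to the reducible configurations behind the Four-Color Theorem, and explicit families such as the flower snarks, the Goldberg snarks, and the graphs produced here are merely samples of an essentially unclassified universe, so a generic snark need not arise from the recursive process above at all. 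Consequently I do not expect the induction to close unconditionally; what the methods above genuinely deliver is the conjecture for structured families (as in the present paper), while the statement for \emph{every} bridgeless cubic graph remains, to the best of current knowledge, open. An honest proposal must therefore flag this final inductive step, rather than the reductions preceding it, as the decisive and still-unbridged gap.
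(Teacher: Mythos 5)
The statement you were asked to prove is Conjecture~1.1 of the paper---the Berge--Fulkerson Conjecture itself---and the paper contains \emph{no} proof of it: it is stated as a famous open problem, and the authors establish only the special case that every graph in $M_{0,1,\ldots,k-1}$ has a Fulkerson-cover (Theorem~\ref{Main thm}, proved via Lemma~\ref{lemma-1} and the explicit matchings and 2-factors of Theorems~\ref{thm-2} and~\ref{thm-3}). So there is no paper proof to compare yours against, and your proposal, as you yourself make explicit, is not a proof either. Your preliminary reductions are sound: doubling the three color classes of a $3$-edge-coloring gives a Fulkerson-cover, so only snarks matter; the standard cut reductions (for a $2$-edge-cut, and for a $3$-edge-cut, where every perfect matching meets an odd edge-cut in an odd number of edges, hence in exactly one edge of a $3$-cut, so covers of the two pieces can be recombined by permuting colors on the cut) reduce the problem to cyclically $4$-edge-connected snarks; Edmonds' perfect matching polytope theorem gives the fractional version; and Mazzuoccolo~\cite{Mazzuoccolo2011} indeed proved the Berge and Fulkerson forms equivalent.

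The genuine gap is exactly the one you flag: the decisive inductive step over \emph{all} cyclically $4$-edge-connected snarks has no known closing argument, because no structure theorem decomposes an arbitrary snark into pieces already known to be Fulkerson-colorable. The recursive gluing in this paper preserves Fulkerson-coverability only along the specific construction of $M_{0,1,\ldots,k-1}$; a generic snark need not arise from it, and the families treated here (like flower or Goldberg snarks elsewhere) are isolated islands in an unclassified universe. Consequently your proposal proves the conjecture only for structured families---which is precisely what the paper itself does---while the universal statement remains open; an examiner should read your final paragraph not as a flaw of candor but as the correct assessment that no completion of this argument is currently available.
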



Although there are some results related with this conjecture, as examples, see~\cite{Chen2015},\cite{Fan1994}, \cite{J},\cite{Macajova2009},\cite{Mazzuoccolo2011}, Berge-Fulkerson conjecture is still open for many bridgeless cubic graphs even for some simple snarks.

H$\ddot{a}$gglund~\cite{J} constructed two graphs Blowup$(K_4, C)$ and Blowup$(Prism, C_4)$. Based on Blowup$(K_4, C)$ , Esperet et al.~\cite{L} constructed infinite families of cyclically 4-edge-connected snarks with excessive index at least five. Based on these two graphs, Chen~\cite{F} constructed infinite families of cyclically 4-edge-connected snarks $E_{0,1,2,\ldots,(k-1)}$ obtained from
 cyclically 4-edge-connected snarks $G_0,G_1,\ldots, G_{k-1}$, in which $E_{0,1,2}$ is Esperet et al.' construction. If only assume that each graph in  $\{G_0,G_1,\ldots, G_{k-1}\}$ has a Fulkerson-cover, then these infinite families of bridgeless cubic graphs are denoted by $M_{0,1,2, \ldots,k-2, k-1}$. Chen~\cite{F} obtained that every graph in $M_{0,1}$ has a Fulkerson-cover and each graph in $M_{0,1,2,3}$ has a Fulkerson-cover and gave the following problem.

\begin{pro}\label{prolem-1}\cite{F}
If $H=\{G; G_0, G_1, \ldots,G_{k-2}, G_{k-1}\} \in M_{0,1,2, \ldots,k-2, k-1}$, Does $H$ have a Fulkerson-cover?
\end{pro}

In this paper, we solve Problem~\ref{prolem-1}. The main result is Theorem~\ref{Main thm}.

 \begin{thm}\label{Main thm}
Each graph in $M_{0, 1, 2, \ldots, k-2, k-1}$ for $k>2$ has a Fulkerson-cover.
\end{thm}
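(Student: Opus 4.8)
The plan is to argue by induction on $k$, exploiting the recursive nature of the family $M_{0,1,\ldots,k-1}$ together with a convenient reformulation of Fulkerson-covers. First I would recast a Fulkerson-cover as a \emph{Fulkerson-coloring}: an assignment to each edge $e$ of a two-element subset $c(e)\subseteq\{1,2,3,4,5,6\}$ such that the three labels meeting at any vertex are pairwise disjoint, hence partition $\{1,\dots,6\}$. Each value $i\in\{1,\dots,6\}$ then induces the perfect matching $F_i=\{e: i\in c(e)\}$, and conversely six perfect matchings covering every edge exactly twice yield such a labeling via $c(e)=\{i: e\in F_i\}$; so producing a Fulkerson-cover is equivalent to producing a valid labeling. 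The advantage of this viewpoint is that the symmetric group $S_6$ acts on colorings by permuting $\{1,\dots,6\}$, so the coloring of any one piece of the construction may be recolored freely, and the only constraint when gluing two pieces is that their labels agree on the shared (interface) edges.

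Next I would make the recursive construction explicit. Each member of $M_{0,1,\ldots,k-1}$ is obtained by attaching the block associated with $G_{k-1}$ to a member of $M_{0,1,\ldots,k-2}$ through the fixed gluing gadget coming from H\"agglund's Blowup$(K_4,C)$ (resp.\ Blowup$(Prism,C_4)$) frame. I would isolate the small set of interface edges $e_1,\dots,e_t$ along which the two parts are joined and record the local partition conditions these edges must satisfy inside the gadget. The base of the induction is Chen's result that every graph in $M_{0,1}$ has a Fulkerson-cover (the case $k=2$); the first inductive step then already delivers $k=3$, and Chen's separate verification for $M_{0,1,2,3}$ serves as a consistency check for $k=4$.

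For the inductive step, the induction hypothesis supplies a Fulkerson-coloring $c'$ of the relevant member of $M_{0,1,\ldots,k-2}$, while the standing assumption supplies a Fulkerson-coloring $c''$ of $G_{k-1}$, equivalently of its block. These two colorings will in general disagree on the interface edges, and the heart of the argument is to show that one can apply a permutation $\sigma\in S_6$ to $c''$ so that $\sigma\circ c''$ agrees with $c'$ on every interface edge while still respecting the vertex-partition condition throughout the block; the merged labeling is then a Fulkerson-coloring of the whole graph. I expect the main obstacle to be exactly this synchronization: one must classify, up to the $S_6$-action, the finitely many boundary patterns that a Fulkerson-coloring can realize on the interface edges of each side, and verify that every pattern occurring on one side can be matched, after a color permutation, to a compatible pattern on the other. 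This is a finite but delicate case analysis governed by the wiring of the blow-up gadget, and it is precisely where the cyclic $4$-edge-connectivity and the rigidity of the frame are used to guarantee enough flexibility in the realizable boundary labelings so that a common pattern always exists.
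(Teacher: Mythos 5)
Your plan has two genuine gaps, and the first is structural. The family $M_{0,1,\ldots,k-1}$ is \emph{not} built by attaching a disjoint block to a member of $M_{0,1,\ldots,k-2}$ along a set of shared interface edges. By part (iv) of the construction, when $\{G;G_0,\ldots,G_{k-1}\}$ is formed from $\{G;G_0,\ldots,G_{k-2}\}$, the edges $a_{k-2}x_0^0$ and $b_{k-2}x_0^1$ of the smaller graph are \emph{deleted} (those attachments are rerouted to $x_{k-1}^0$ and $x_{k-1}^1$), and the edge $e_0$ is \emph{subdivided} by a new vertex $v_{k-3}$, which then receives a third edge to $c_{k-1}$. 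Consequently the larger graph does not contain the smaller one minus interface edges, and a Fulkerson-coloring $c'$ of the smaller graph is not a partial Fulkerson-coloring of the larger one: at $v_{k-3}$ the two halves of $e_0$ would both inherit the pair $c'(e_0)$, violating the requirement that the three pairs at a degree-$3$ vertex be pairwise disjoint, and the old colors at $a_{k-2}$, $b_{k-2}$ say nothing about their new incident edges. So the proposed merge ``$c'$ together with $\sigma\circ c''$'' is not even well-posed; repairing $c'$ would require recoloring along structures joining $v_{k-3}$, $a_{k-2}$, $b_{k-2}$, i.e.\ a new global argument rather than a color permutation. Any honest induction would also need a strengthened hypothesis controlling the coloring near $c_0$ and $e_0$, and the fact that the paper must treat even and odd $k$ by different constructions (Theorems~\ref{thm-2} and~\ref{thm-3}) is a warning that a uniform inductive statement is not straightforward.

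The second gap is that the $S_6$-synchronization step, which you correctly identify as the heart of the matter, is exactly the hard part and is left unproved. Where genuine gluing does occur ($H_{k-1}$ attaches by four edges), the boundary patterns realizable by Fulkerson-colorings of $G_{k-1}$ on those four edges are constrained (disjoint pairs on the $x$-side and on the $y$-side, with equal unions), and \emph{which} patterns occur depends on $G_{k-1}$; nothing guarantees that some realizable pattern matches, even after a permutation, what the gadget side can accommodate. Your appeal to cyclic $4$-edge-connectivity as the source of ``flexibility'' is also misplaced: in $M_{0,1,\ldots,k-1}$ the $G_i$ are only assumed to have a Fulkerson-cover (the connectivity and excessive-index hypotheses are dropped), so that leverage is unavailable, and in the snark family excessive index at least $5$ makes colorings more rigid, not less. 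The paper sidesteps all of this: it never synchronizes colorings, but instead applies the characterization of Lemma~\ref{lemma-1}, constructing non-inductively, for the whole graph at once, two disjoint matchings $E_0$, $E_2$ --- assembled from parity-dependent choices of the sets $E_i^2$, $E_i^0$ inside each $G_i$ together with explicitly listed gadget edges --- whose union is an even cycle, and then certifying $3$-edge-colorability of the two suppressed complements by exhibiting explicit $2$-factors all of whose components are even circuits. If you wish to pursue your coloring formulation, you would in effect have to reprove that lemma's content by hand inside every block, which is a much longer route than the one the paper takes.
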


\section{Preliminaries}

In this section, some necessary definitions, constructions and Lemma are given.

Let $X\subseteq V(G)$ and $Y\subseteq E(G)$. We use $G\setminus X$ to denote the subgraph of $G$ obtained from $G$ by deleting all the vertices of $X$ and all the edges incident with $X$. While $G\setminus Y$ to denote the subgraph of $G$ obtained from $G$ by deleting all the edges of $Y$. The edge-cut of $G$ associated with $X$, denoted by $\partial_{G}(X)$, is the set of edges of $G$ with exactly one end in $X$. The edge set $C=\partial_{G}(X)$ is called a $k$-edge-cut if $|\partial_{G}(X)|=k$.  A cycle of $G$ is a subgraph of $G$ with each vertex of even degree. A circuit of $G$ is a minimal 2-regular cycle of $G$. A graph $G$ is called cyclically $k$-edge-connected if at least k edges must be removed to disconnect it into two components, each of which contains a circuit.

Let $G_i$ be a cyclically 4-edge-connected snark with excessive index at least 5, for $i=0,1$. Let $x_iy_i$ be an edge of $G_i$ and $x_i^0$, $x_i^1$ ($y_i^0$, $y_i^1$) be the neighbours of $x_i$ ($y_i$). Let $H_i$ be the graph obtained from $G_i$ by deleting the vertices $x_i$ and $y_i$. Let $\{G; G_0, G_1\}$ be the graph obtained from the disjoint union of $H_0$, $H_1$ by adding six vertices $a_0$, $b_0$, $c_0$, $a_1$, $b_1$, $c_1$ and 13 edges $a_0y_0^0$, $a_0x_1^0$, $a_0c_0$, $c_0b_0$, $b_0y_0^1$, $b_0x_1^1$, $b_1x_0^1$, $b_1y_1^1$, $b_1c_1$, $c_1a_1$, $a_1x_0^0$, $a_1y_1^0$, $c_0c_1$. The graphs of this type are denoted as $E_{0,1}$ (see Figure~\ref{F1}).

\begin{figure}[ht]
\hskip5cm
\begin{center}
\vskip-1.5cm
\includegraphics[scale=0.5]{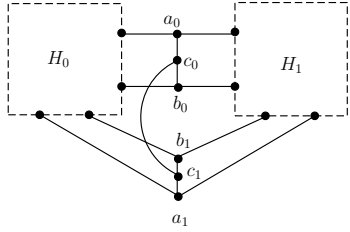}
\end{center}
\vskip-10cm
\caption{$\{G;G_0, G_1\}$}\label{F1}
\end{figure}

 The family of graphs $E_{0,1, \ldots,(k-1)}$ ($k\geq 2$) and $M_{0,1, \ldots,(k-1)}$ ($k\geq 2$) are constructed by Chen as follows:

(1) $\{G; G_0, G_1\}\in E_{0,1}$ with $A_j=\{a_j, b_j, c_j\}$ for $j=0, 1$.

(2) For $3\leq i\leq k$, $\{G; G_0, G_1, \ldots,G_{i-1}\}$ is obtained from $\{G; G_0, G_1, \ldots,$ $G_{i-2}\}\in E_{0,1, \ldots,(i-2)}$ by adding $H_{i-1}$ and $A_{i-1}=\{a_{i-1}, b_{i-1}, c_{i-1}\}$ and by inserting a vertex $v_{i-3}$ into $e_0$, such that

(i) $G_{i-1}$ is a cyclically 4-edge-connected snark with excessive index at least 5 ($x_{i-1}y_{i-1}$ is an edge of $G_{i-1}$ and $x_{i-1}^0$, $x_{i-1}^1$ ($y_{i-1}^0$, $y_{i-1}^1$) are the neighbours of $x_{i-1}$ ($y_{i-1}$));

(ii) $H_{i-1}=G_{i-1}\backslash \{x_{i-1}, y_{i-1}\}$;

(iii) $e_0\in E(\{G; G_0, G_1, \ldots, G_{i-2}\})-\bigcup\limits_{j=0}^{i-2} E(H_j)-\bigcup\limits_{j=0}^{i-2}\{a_jc_j, c_jb_j\}$ and $e_0$ is incident with $c_0$;

(iv) $a_{i-1}$ is adjacent to $x_0^0$ and $y_{i-1}^0$, $b_{i-1}$ is adjacent to $x_0^1$ and $y_{i-1}^1$, $a_{i-2}$ is adjacent to $x_{i-1}^0$ and $y_{i-2}^0$, $b_{i-2}$ is adjacent to $x_{i-1}^1$ and $y_{i-2}^1$, $c_{i-1}$ is adjacent to $a_{i-1}$, $b_{i-1}$ and $v_{i-3}$, the other edges of $\{G; G_0, G_1, \ldots ,G_{i-2}\}$ remain the same;

(v) $\{G; G_0, G_1, \ldots,G_{i-1}\}\in E_{0,1, \ldots,(i-1)}$.

The class of graphs constructed by Esperet et al. is a special case for $k=3$ of $E_{0,1, \ldots,(k-1)}$. If the excessive index and non 3-edge-colorability of $G_i (i=0, 1, 2,\ldots , (k-1))$ is ignored and only assume that $G_i$ has a Fulkerson-cover, then we obtain infinite families of bridgeless cubic graphs. We denote graphs of this type as $M_{0,1,2, \ldots,(k-1)} (k\geq 2)$.

The following Lemma is very import in our main proofs.

\begin{lem}\label{lemma-1}
{\rm (Hao, Niu, Wang, Zhang and Zhang \cite{H})}
A bridgeless cubic graph $G$ has a Fulkerson-cover if and only if there are two disjoint matchings $E_0$ and $E_2$, such that $E_0\cup E_2$ is a cycle and $\overline{G\setminus
 E_i}$ is 3-edge colorable, for each $i=0, 2$.
\end{lem}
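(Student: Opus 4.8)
The plan is to prove the equivalence by passing through the standard reformulation of a Fulkerson-cover as a family of six perfect matchings $M_1,\dots,M_6$ of $G$ covering each edge exactly twice (equivalently, a proper $6$-edge-colouring of $2G$), and throughout I read $\overline{G\setminus E_i}$ as the cubic graph obtained from $G\setminus E_i$ by suppressing its degree-two vertices, so that ``$\overline{G\setminus E_i}$ is $3$-edge-colourable'' asks for a proper $3$-edge-colouring of that suppressed cubic graph. For the direction ``Fulkerson-cover $\Rightarrow$ two matchings'' I would fix six such matchings and split them into two triples $T_1=\{M_1,M_2,M_3\}$ and $T_2=\{M_4,M_5,M_6\}$. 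Since each edge lies in exactly two of the six matchings, at every vertex $v$ the three incident edges receive from $T_1$ a multiset of incidence-counts summing to $3$ with each count at most $2$; hence locally only two patterns occur, the \emph{balanced} pattern $(1,1,1)$ or the pattern $(2,1,0)$. Setting $E_0$ (resp.\ $E_2$) to be the edges lying in two matchings of $T_1$ (resp.\ $T_2$), a short local check shows that at a $(2,1,0)$-vertex exactly one incident edge lies in $E_0$ and exactly one in $E_2$, while balanced vertices meet neither; thus $E_0,E_2$ are disjoint matchings and $E_0\cup E_2$ is $2$-regular on its support, i.e.\ a cycle whose circuits alternate between $E_0$ and $E_2$.

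It then remains to colour $\overline{G\setminus E_0}$ and, symmetrically, $\overline{G\setminus E_2}$. I would colour $\overline{G\setminus E_0}$ using $T_2$: each edge of $G$ outside $E_0$ that survives as an honest edge lies in a unique matching of $T_2$ and inherits that colour in $\{4,5,6\}$, while each edge of $\overline{G\setminus E_0}$ arising from a suppressed path is coloured by the common $T_2$-index of the non-cycle edges along that path. The crucial point, which I would isolate as a small lemma, is that this $T_2$-index is \emph{constant} along every suppressed path: crossing a cycle-edge $e_2\in E_2$ (which lies in two matchings of $T_2$) forces its two flanking non-cycle edges to lie in the remaining, third $T_2$-matching, so the colour never changes. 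Since balanced vertices of $G$ see all three colours $4,5,6$ on their incident edges, this produces a proper $3$-edge-colouring of $\overline{G\setminus E_0}$, and the same argument with $T_1$ colours $\overline{G\setminus E_2}$.

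For the converse I would reverse this dictionary. Given disjoint matchings $E_0,E_2$ with $E_0\cup E_2$ a cycle together with proper $3$-edge-colourings of $\overline{G\setminus E_0}$ (colours $4,5,6$) and of $\overline{G\setminus E_2}$ (colours $1,2,3$), I would lift each colouring back to $G$. A colour class of $\overline{G\setminus E_0}$ determines a perfect matching $M_d$ of $G$ consisting of the non-cycle edges coloured $d$ together with exactly the $E_2$-edges needed to cover the remaining cycle-vertices, and symmetrically for $\overline{G\setminus E_2}$. I would then verify that the six sets $M_1,\dots,M_6$ so produced are perfect matchings covering every edge exactly twice, treating $E_0$-edges, $E_2$-edges and ordinary edges separately via the incidence bookkeeping already set up.

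I expect the converse to be the main obstacle. The forward direction is essentially a local, vertex-by-vertex argument once the colour-invariance lemma is available, but the converse must show that the two independently chosen colourings splice into a single double cover: one has to check that each reconstructed $M_d$ really is a spanning perfect matching (in particular that no cycle-vertex is left uncovered or doubly covered), and that the $E_0$- and $E_2$-edges, which are deleted in one suppressed graph but present in the other, end up in exactly two matchings each. Handling the degenerate suppressed components—closed alternating circuits of cycle-vertices carrying no balanced vertex—also requires the hypothesis that $G$ is bridgeless, which forces these circuits to be even and hence colour-consistent.
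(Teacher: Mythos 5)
The paper does not prove this lemma at all: it is imported verbatim from Hao, Niu, Wang, Zhang and Zhang \cite{H}, so there is no in-paper proof to compare against. Your argument is correct and is essentially the original one from \cite{H}: splitting a Fulkerson cover into two triples, taking $E_0$ (resp.\ $E_2$) to be the edges covered twice by one triple (resp.\ the other), proving the colour-invariance claim across cycle edges to induce proper $3$-edge-colourings of the suppressed graphs, and, conversely, lifting each colour class $d$ to the perfect matching consisting of the ordinary edges of $d$-coloured suppressed paths together with the cycle edges of the differently coloured paths; this is also exactly the dictionary the present paper uses when it applies the lemma in Theorems 3.1 and 3.2 (the sets $E_i^2$, $E_i^0$ and the $2$-factors $C_i^2$, $C_i^0$). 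One small correction: in your final remark, bridgelessness is not what handles the degenerate components of $G\setminus E_i$ consisting entirely of cycle-vertices. Every such circuit alternates between cycle edges and ordinary edges --- since each cycle-vertex is incident with exactly one edge of $E_0$ and one of $E_2$ --- so it has even length automatically, and in the converse direction you may simply assign it an arbitrary colour $d_0$, placing its ordinary edges in $M_{d_0}$ and its cycle edges in the other two matchings; no connectivity hypothesis is needed there.
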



\section{Each graph in $M_{0, 1, 2, \ldots, k-2, k-1}$ has a Fulkerson cover}
We give the results according to the parity of $k$.

\begin{thm}\label{thm-2} Let $k$ be an even integer and $k\geq 4$.
If $\Gamma\in M_{0,1,2, \ldots,k-2, k-1}$, then $\Gamma$ has a Fulkerson-cover.
\end{thm}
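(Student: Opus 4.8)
The plan is to apply Lemma~\ref{lemma-1}: it suffices to produce two disjoint matchings $E_0$ and $E_2$ of $\Gamma$ such that $E_0\cup E_2$ is a cycle and both $\overline{\Gamma\setminus E_0}$ and $\overline{\Gamma\setminus E_2}$ are $3$-edge-colorable. The construction is local-to-global. Since each constituent graph $G_i$ has a Fulkerson-cover, Lemma~\ref{lemma-1} applied to $G_i$ yields disjoint matchings $E_0^{(i)},E_2^{(i)}$ with $E_0^{(i)}\cup E_2^{(i)}$ a cycle of $G_i$ and both $\overline{G_i\setminus E_0^{(i)}}$ and $\overline{G_i\setminus E_2^{(i)}}$ $3$-edge-colorable. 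First I would record, for each $i$, how this data sits on the distinguished edge $x_iy_i$ and on the four half-edges at $x_i^0,x_i^1,y_i^0,y_i^1$, since $H_i=G_i\setminus\{x_i,y_i\}$ is exactly what survives inside $\Gamma$.

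The next step is to put the per-block data into a normal form at the interface. Because a proper $3$-edge-coloring may have its three colors permuted independently in each block, and because the roles of $E_0^{(i)}$ and $E_2^{(i)}$ may be swapped, I expect to arrange that the restriction of $(E_0^{(i)},E_2^{(i)})$ to $H_i$, together with the induced colors on the four half-edges, follows one fixed pattern for every $i$. Concretely I would restrict $E_0^{(i)}\cup E_2^{(i)}$ to $H_i$, suppress the effect of the deleted vertices $x_i,y_i$, and read off the resulting membership ($E_0$, $E_2$, or neither) and color of each edge leaving $H_i$ toward the gadget vertices $a_\ast,b_\ast$.

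With the blocks normalized I would then define $E_0$ and $E_2$ on the gadget, i.e.\ on the vertices $a_i,b_i,c_i$ ($0\le i\le k-1$) and the spine vertices $v_0,\dots,v_{k-3}$, together with the edges $a_ic_i,c_ib_i$, the spine edges $c_0 v_{k-3},\dots,v_0 c_1$, and the half-edges attaching $a_i,b_i$ to the $H_j$'s. This assignment is chosen so that (a) $E_0$ and $E_2$ remain matchings on all of $\Gamma$; (b) every vertex lies in both or in neither of $E_0,E_2$, so that $E_0\cup E_2$ is an even subgraph, hence a cycle; and (c) the half-edge colors prescribed by the normal form extend to proper $3$-edge-colorings of the gadget in both pictures $\overline{\Gamma\setminus E_0}$ and $\overline{\Gamma\setminus E_2}$. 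The gadget has a period-two structure along the spine $c_0,v_{k-3},\dots,v_0,c_1$ and the attached paths $a_ic_ib_i$, and the assignment I would use alternates with period two; evenness of $k$ is precisely the condition under which this alternation closes up consistently around the cyclic structure of $\Gamma$, which is why the even case is isolated here. Conditions (a) and (b) are then routine vertex-by-vertex checks once the pattern is fixed.

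The main obstacle is condition (c). Although each $\overline{G_i\setminus E_j^{(i)}}$ is $3$-edge-colorable on its own, I must match the boundary colors at every one of the gadget vertices $a_i,b_i,c_i$ simultaneously and then carry a consistent coloring all the way around the global chain. Individually each block can still be recolored by a permutation of the three colors, but once the gadget colors are fixed these permutations are no longer free, so closing the chain imposes a genuine parity constraint. I expect to resolve this by means of the period-two pattern above, together with a final consistency check concentrated at the two end gadgets $A_0=\{a_0,b_0,c_0\}$ and $A_1=\{a_1,b_1,c_1\}$, where the chain closes. Verifying that the two colorings, one for $\overline{\Gamma\setminus E_0\,}$ and one for $\overline{\Gamma\setminus E_2\,}$, each extend without conflict is the technical heart of the argument, and it is exactly this closing-up that would fail for odd $k$ and forces the separate treatment.
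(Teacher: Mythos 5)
Your proposal follows the paper's skeleton (reduce via Lemma~\ref{lemma-1}, extract per-block matchings $E_i^0,E_i^2$ from each $G_i$'s Fulkerson-cover, then wire them together through the gadget), but it has two genuine gaps. First, your ``normal form'' step rests on the wrong symmetries. Permuting the three colors inside a block and swapping the roles of $E_0^{(i)}$ and $E_2^{(i)}$ cannot change how the cycle $E_0^{(i)}\cup E_2^{(i)}$ sits relative to the deleted edge $x_iy_i$ and its endpoints: that is determined by which edges lie in the matchings, not by any coloring. The freedom that actually matters is the choice of \emph{which three} of the six perfect matchings in the Fulkerson-cover of $G_i$ you use to form $E_i^2$ (edges covered twice) and $E_i^0$ (edges uncovered): taking both matchings through $x_iy_i$ forces $x_iy_i\in E_i^2$, while taking exactly one of them (suitably) keeps $x_i,y_i$ off the cycle entirely. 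Moreover, the target is not ``one fixed pattern for every $i$'': the construction needs \emph{different} interface configurations according to the parity of the block index --- for even $i$ one wants $x_i,y_i\notin V(E_i^2\cup E_i^0)$, while for odd $i$ one wants $x_iy_i\in E_i^2$ with $x_ix_i^0,\,y_iy_i^0\in E_i^0$ --- because the gadget edges entering $E_0$ and $E_2$ alternate along the spine.

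Second, and more seriously, the step you yourself call the technical heart --- showing $\overline{\Gamma\setminus E_0}$ and $\overline{\Gamma\setminus E_2}$ are $3$-edge-colorable --- is deferred rather than proved, and the route you sketch (propagating boundary colors block by block and closing the chain, with a parity obstruction at the ends) is exactly the difficulty the paper avoids. The paper never matches colors across blocks. Instead it uses the equivalence between $3$-edge-colorability of a cubic graph and the existence of a $2$-factor all of whose components are even circuits: from each block's $3$-edge-coloring one extracts an even $2$-factor $C_i^0$ or $C_i^2$ that contains or avoids the few prescribed boundary edges (always arrangeable by taking the union of two appropriate color classes), and these block $2$-factors are then glued to the gadget edges to form a global $2$-factor of $\overline{\Gamma\setminus E_0}$ (respectively $\overline{\Gamma\setminus E_2}$) with all components even. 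No color consistency between blocks is ever needed, so the ``closing-up'' obstruction you anticipate simply does not arise. Relatedly, your claim that this closing-up is what fails for odd $k$ is misleading: the paper proves the odd case as well (treating $G_0$ like an odd-indexed block), so the parity split is a matter of bookkeeping in the construction, not of the result.
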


\begin{proof} Since $\Gamma\in M_{0,1,2, \ldots,k-2, k-1}$, assume $\Gamma=\{G; G_0, G_1, \ldots,G_{k-2}, G_{k-1}\}$.

Since $G_{i}$ has a Fulkerson-cover, for each $i=0,1,\ldots,k-1$, suppose that $\{M_i^1, M_i^2, M_i^3$ $, M_i^4, M_i^5, M_i^6\}$ is the Fulkerson-cover of $G_{i}$. Let $E_i^{2}$ be the set of edges in $G_i$ covered twice by $\{M_i^1, M_i^2, M_i^3 \}$ and $E_i^{0}$ be the set of edges in $G_i$ which are not covered by $\{M_i^1, M_i^2, M_i^3 \}$. Note that $E_i^{2} \cup E_i^{0}$ is an even cycle, and $\overline{G_i\setminus E_i^{2}}$ and $\overline{G_i\setminus E_i^{0}}$ can be colored by three colors. Then $E_i^{2}$ and $E_i^{0}$ are the desired disjoint matchings of $G_i$ as in Lemma ~\ref{lemma-1}. By choosing three perfect matchings of $G_{i}$, for each $i=0, 1,\ldots, k-1$, we can obtain two desired disjoint matchings $E_i^{2}$ and $E_i^{0}$ such that $x_iy_i\in E_i^{2}\cup E_i^{0}$ or $x_{i}, y_i\not\in V(E_i^{2}\cup E_i^{0})$.

Three perfect matchings $\{M_i^1, M_i^2, M_i^3 \}$ of $G_i$ are chosen such that $x_{i}, y_i\not\in V(E_i^{2} \cup E_i^{0})$ if $i$ is even; And three perfect matchings $\{M_i^1, M_i^2, M_i^3 \}$ of $G_i$ are chosen such that $x_iy_i\in E_i^{2}\cup E_i^{0}$ if $i$ is odd. Without loss of generality, assume that $x_iy_i\in E_i^{2}$ and $x_i^0x_i, y_i^0y_i\in E_i^{0}$ for odd $i$.

\vskip0.5cm

\begin{figure}[ht]
\begin{center}
\vskip-1cm
\includegraphics[scale=0.45]{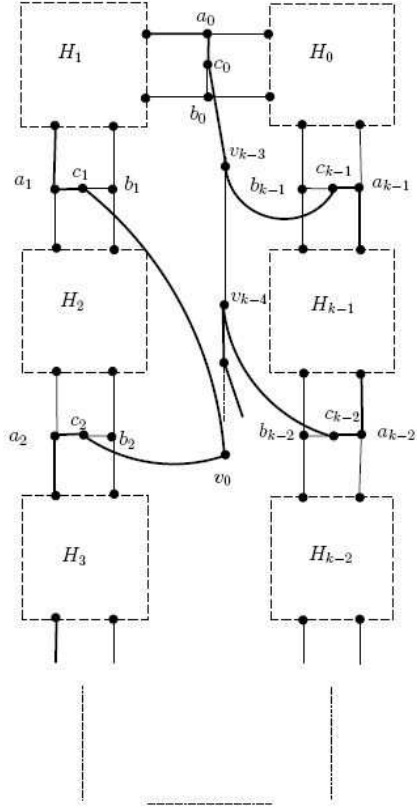}
\vskip-20cm
\end{center}
\caption{$\{G; G_0, G_1, \ldots,G_{k-1}\}$}\label{F6}
\end{figure}

\begin{figure}[ht]
\begin{center}
\vskip-3cm
\hskip-3cm
\includegraphics[scale=0.7]{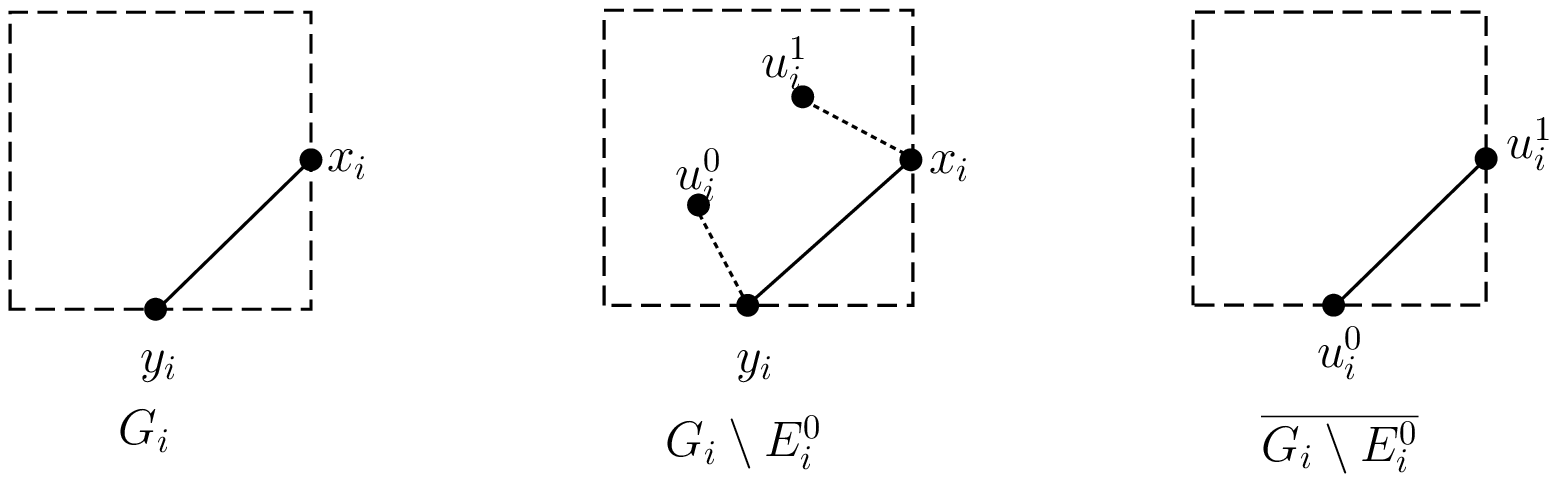}
\end{center}
\vskip-14cm
\caption{}\label{F2}
\end{figure}

\begin{figure}[ht]
\begin{center}
\vskip-3cm
\hskip-3cm
\includegraphics[scale=0.7]{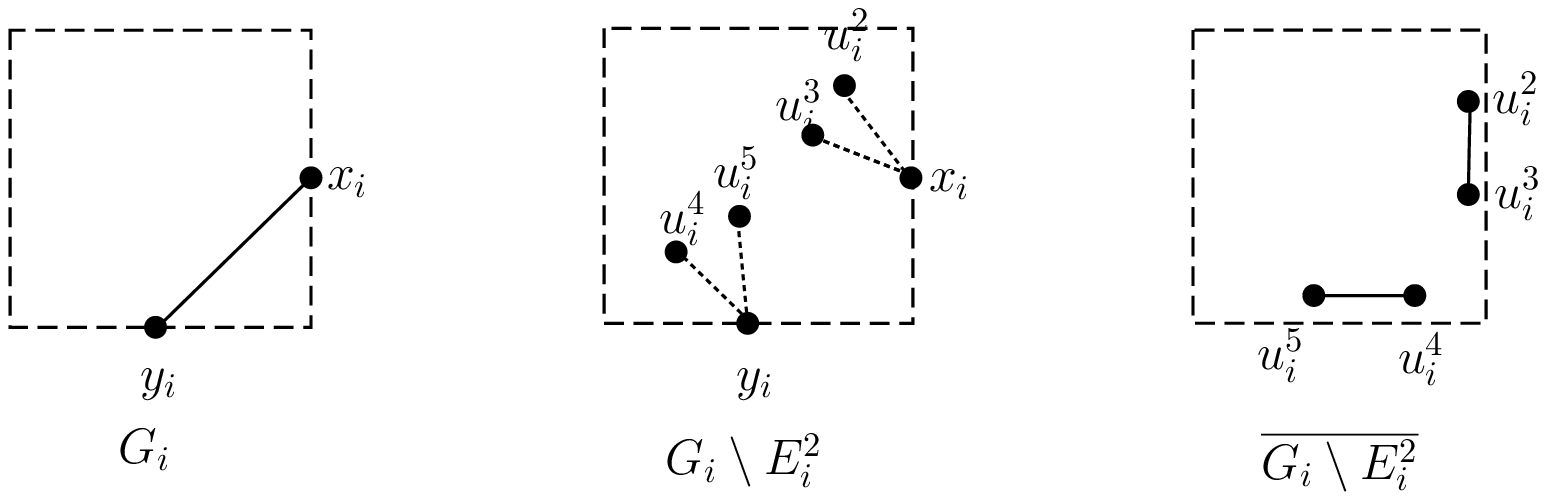}
\end{center}
\vskip-14cm
\caption{}\label{F3}
\end{figure}

Let $$E_0=(E_0^2-x_0y_0)\cup (E_1^0-\{x_1^0x_1, y_1^0y_1\})\cup \bigcup\limits_{i=2}^{k-1}(E_i^2-{x_iy_i})\cup \bigcup\limits_{i=2}^{k-1}a_ic_i$$ $$\cup \bigcup\limits_{j=1}^{\frac{k-4}{2}} v_{2j-1}v_{2j}\cup \{c_0v_{k-3}, c_1v_0, y_1^0a_1, x_1^0a_0\}$$ and $$E_2=(E_0^0-\{x_0x_0^0,y_0y_0^0\})\cup (E_1^2-{x_1y_1})\cup \bigcup\limits_{i=2}^{k-1}(E_i^0-\{x_ix_i^0, y_iy_i^0\})$$ $$\cup \bigcup\limits_{j=1}^{\frac{k-2}{2}}\{y_{2j+1}^0a_{2j+1}, x_{2j+1}^0a_{2j}\}\cup \bigcup\limits_{i=2}^{k-1} v_{i-2}c_i\cup \{a_0c_0, a_1c_1\}.$$ Clearly, $E_0\cup E_2$ is an even cycle $C$. (See Figure~\ref{F6}.)

If $i$ is odd, by $x_iy_i\in E_i^{2}$, there exists a maximal path containing only 2-degree vertices as inter vertices in the graph $G_i\setminus E_i^0$, say $u_i^0\ldots y_ix_i\ldots u_i^1$, which corresponds to an edge $u_i^0u_i^1$ in the graph $\overline{G_i\setminus E_i^0}$ (see Figure~\ref{F2}). From $x_i^0x_i, y_i^0y_i\in E_i^{0}$, there exists two maximal paths containing only 2-degree vertices as inter vertices in the graph $G_i\setminus E_i^2$, say $u_i^2\ldots x_i^0x_ix_i^1\ldots u_i^3$ and $u_i^5\ldots y_i^0y_iy_i^1\ldots u_i^4$, which corresponds to an edge $u_i^2u_i^3$ and $u_i^4u_i^5$, respectively, in the graph $\overline{G_i\setminus E_i^2}$ (see Figure~\ref{F3}).

If $i$ is even, by $x_{i}, y_i\not\in V(E_i^{2})$, there exist four maximal paths containing only 2-degree vertices as inter vertices in the graph $G_i\setminus E_i^2$, say $u_i^0\ldots x_i^1x_i$ (maybe $u_i^0=x_i^1$), $u_i^1\ldots x_i^0x_i$ (maybe $u_i^1=x_i^0$), $u_i^2\ldots y_i^1y_i$ (maybe $u_i^2=y_i^1$) and $u_i^3\ldots y_i^0y_i$, which correspond to four edges $u_i^0x_i$, $u_i^1x_i$, $u_i^2y_i$ and $u_i^3y_i$, respectively, in the graph $\overline{G_i\setminus E_i^2}$ (see Figure~\ref{F4}).

Similarly, by $x_{i}, y_i\not\in V(E_i^{0})$, there exist four maximal paths containing only 2-degree vertices as inter vertices in the graph $G_i\setminus E_i^0$, say $u_i^4\ldots x_i^1x_i$ (maybe $u_i^4=x_i^1$), $u_i^5\ldots x_i^0x_i$ (maybe $u_i^5=x_i^0$), $u_i^6\ldots y_i^1y_i$ (maybe $u_i^6=y_i^1$) and $u_i^7\ldots y_i^0y_i$ (maybe $u_i^7=y_i^0$), which correspond to four edges $u_i^4x_i$, $u_i^5x_i$, $u_i^6y_i$ and $u_i^7y_i$, respectively, in $\overline{G_i\setminus E_i^0}$ (see Figure~\ref{F5}).

\begin{figure}[ht]
\begin{center}
\vskip-3cm
\hskip-3cm
\includegraphics[scale=0.7]{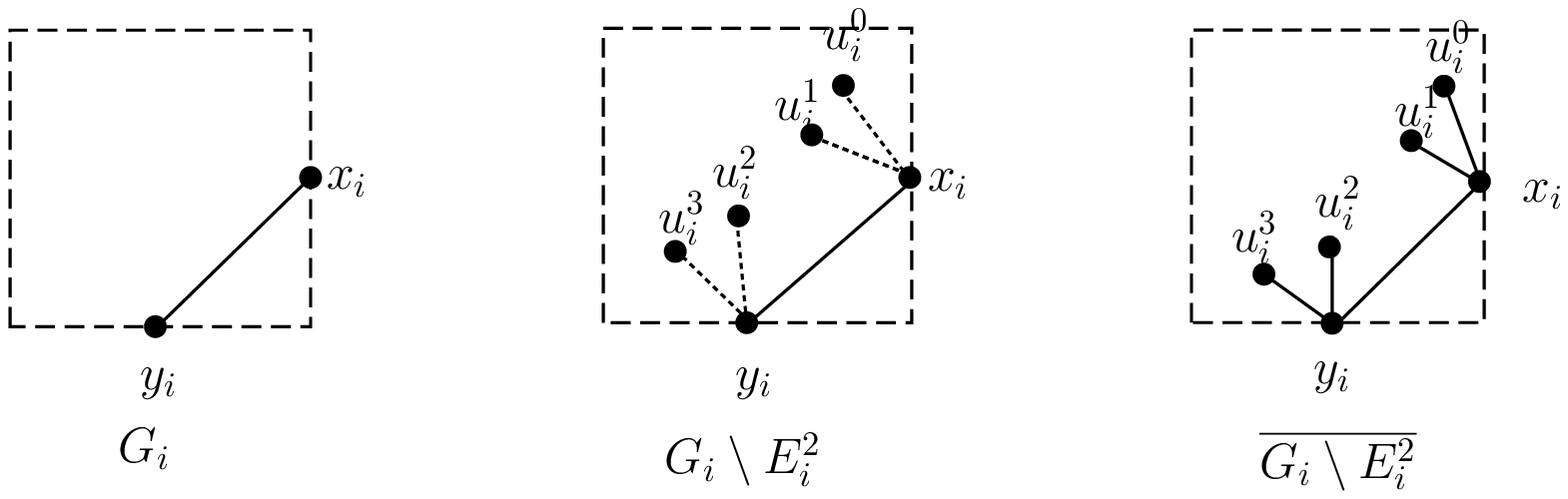}
\end{center}
\vskip-14cm
\caption{}\label{F4}
\end{figure}

\begin{figure}[ht]
\begin{center}
\vskip-3cm
\hskip-3cm
\includegraphics[scale=0.7]{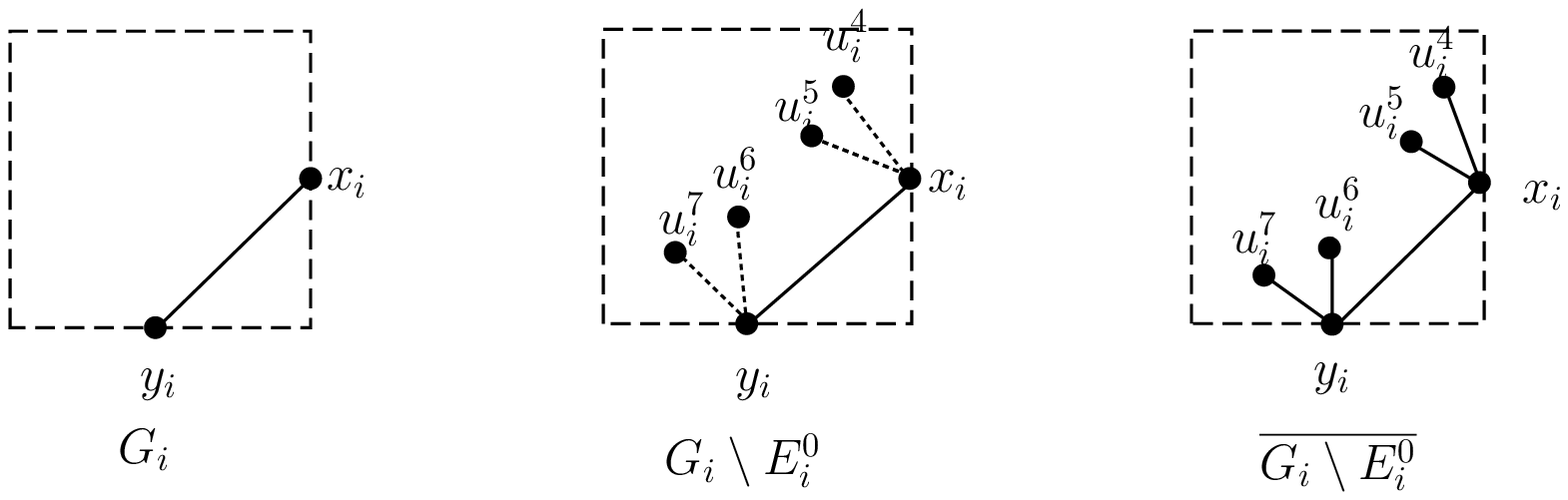}
\end{center}
\vskip-14cm
\caption{}\label{F5}
\end{figure}

\begin{figure}[ht]
\begin{center}
\vskip-0.8cm
\includegraphics[scale=0.4]{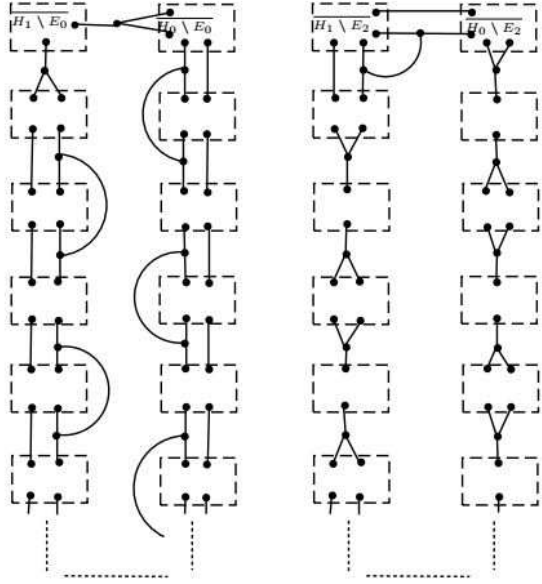}
\vskip-20cm
\end{center}
\caption{$\overline{\{G;G_0,G_1,\ldots,G_{k-1}\}\setminus E_0}$ and $\overline{\{G;G_0,G_1,\ldots,G_{k-1}\}\setminus E_2}$}\label{F7}
\end{figure}

 From the construction of $\Gamma$, we know that  $\overline{\Gamma\setminus E_0}$ (see Figure~\ref{F7}) is

 $$\begin{array}{l}
(\overline{G_1\setminus E_1^0}-u_1^0u_1^1)\cup \bigcup\limits_{j=0}^{\frac{k-2}{2}}(\overline{G_{2j}\setminus E_{2j}^2}-\{x_{2j}, y_{2j}\})\\
 \cup \bigcup\limits_{j=1}^{\frac{k-2}{2}}(\overline{G_{2j+1}\setminus E_{2j+1}^2}-\{u_{2j+1}^2u_{2j+1}^3, u_{2j+1}^4u_{2j+1}^5\})\cup \{u_1^0b_1, u_1^1b_0, b_1u_2^0, b_1u_2^1, u_0^2b_0, u_0^3b_0\}\\
 \cup\bigcup\limits_{j=1}^{\frac{k-2}{2}} \{u_{2j}^3u_{2j+1}^2, u_{2j}^2b_{2j}, u_{2j+1}^3b_{2j}, u_{2j+1}^5u_{2j+2}^1, u_{2j+1}^4b_{2j+1}, u_{2j+2}^0b_{2j+1}, b_{2j}b_{2j+1}\}\end{array}$$

  and $\overline{\Gamma\setminus E_2}$ (see Figure~\ref{F7}) is
  $$\begin{array}{l}(\overline{G_1\setminus E_1^2}-\{u_1^2u_1^3, u_1^4u_1^5\})\cup \bigcup\limits_{j=0}^{\frac{k-2}{2}}(\overline{G_{2j}\setminus E_{2j}^0}-\{x_{2j}, y_{2j}\})\\
  \cup \bigcup\limits_{j=1}^{\frac{k-2}{2}} (\overline{G_{2j+1}\setminus E_{2j+1}^0}-u_{2j+1}^0u_{2j+1}^1)\\
  \cup \{u_1^4b_1, u_1^5u_2^5, b_1u_2^4, u_1^3b_0, u_0^7u_1^2, u_0^6b_0, b_0b_1\}\\
  \cup\bigcup\limits_{j=1}^{\frac{k-2}{2}}\{u_{2j}^6b_{2j}, u_{2j}^7b_{2j}, b_{2j}u_{2j+1}^1, u_{2j+1}^0b_{2j+1}, b_{2j+1}u_{2j+2}^4, b_{2j+1}u_{2j+2}^5\}.\end{array}$$

 If $i$ is odd, because $E_i^{2}$ and $E_i^{0}$ are the desired disjoint matchings of $G_i$ as in Lemma ~\ref{lemma-1}, $\overline{G_i\setminus E_i^0}$ is 3-edge colorable. Thus there exists a 2-factor, say $C_i^0$, such that each component is an even circuit and $u_i^0u_i^1$ is not in the 2-factor $C_i^0$. Similarly, because $\overline{G_i\setminus E_i^2}$ is 3-edge colorable, there exists a 2-factor $C_i^2$ such that each component is an even circuit and $\{u_i^2u_i^3, u_i^4u_i^5\}$ is in the 2-factor $C_i^2$.

 If $i$ is even, because $\overline{G_i\setminus E_i^2}$ is 3-edge colorable, there exists a 2-factor $C_i^2$ such that each component is an even circuit and $u_i^0x_iu_i^1$ and $u_i^2y_iu_i^3$ are in the 2-factor $C_i^2$. Because $\overline{G_i\setminus E_i^0}$ is 3-edge colorable, there exists a 2-factor $C_i^0$ such that each component is an even circuit and $\{u_i^4x_iu_i^5, u_i^6y_iu_i^7\}$ is in the 2-factor $C_i^0$.

 Then $\overline{\Gamma\setminus E_0}$ has a 2-factor:
 $$\begin{array}{l} C_1^0\cup \bigcup\limits_{j=1}^{\frac{k-2}{2}}(C_{2j+1}^2-\{u_{2j+1}^2u_{2j+1}^3, u_{2j+1}^4u_{2j+1}^5\})\cup \bigcup\limits_{j=0}^{\frac{k-2}{2}}(C_{2j}^2-\{x_{2j}, y_{2j}\})\cup \{b_0u_0^3, b_0u_0^2,\\
  b_1u_2^0, b_1u_2^1\}\cup \bigcup\limits_{j=1}^{\frac{k-2}{2}}\{b_{2j}u_{2j+1}^3, b_{2j}u_{2j}^2, u_{2j}^3u_{2j+1}^2, b_{2j+1}u_{2j+1}^4, b_{2j+1}u_{2j+2}^0, u_{2j+1}^5u_{2j+2}^1\}.\end{array}$$ And each component is an even circuit.

 And $\overline{\Gamma\setminus E_2}$ has a 2-factor:
 $$\begin{array}{l}\bigcup\limits_{j=1}^{\frac{k-2}{2}}C_{2j+1}^0\cup (C_1^2-\{u_1^2u_1^3, u_1^4u_1^5\})\cup \bigcup\limits_{j=0}^{\frac{k-2}{2}}(C_{2j}^0-\{x_{2j}, y_{2j}\})\cup \{b_0u_0^6, b_0u_1^3, u_0^7u_1^2,\\ b_1u_1^4,
 u_1^5u_2^5, b_1u_2^4\}\cup \bigcup\limits_{j=1}^{\frac{k-2}{2}}\{u_{2j}^6b_{2j}, u_{2j}^7b_{2j}, u_{2j+2}^4b_{2j+1}, u_{2j+2}^5b_{2j+1}\}.\end{array}$$ And each component is an even circuit.

 So $\overline{\Gamma\setminus E_0}$ and $\overline{\Gamma\setminus E_2}$ are 3-edge colorable. Therefore $E_0$ and $E_2$ are the desired matchings in $\Gamma$ of Lemma~\ref{lemma-1}. So $\Gamma=\{G; G_0, G_1,$ $\ldots,G_{k-2},$ $G_{k-1}\}$ has a Fulkerson-cover.
\end{proof}

\begin{thm}\label{thm-3} Let $k$ be an odd integer. If $H\in M_{0, 1, 2, \ldots, k-2, k-1}$, then $H$ has a Fulkerson-cover.
\end{thm}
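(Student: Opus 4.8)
The plan is to mirror the proof of Theorem~\ref{thm-2} almost verbatim, adjusting only the parity bookkeeping that is forced by $k$ being odd. As before, I would begin from a Fulkerson-cover $\{M_i^1,\ldots,M_i^6\}$ of each $G_i$ and, via Lemma~\ref{lemma-1}, extract from three of these perfect matchings the two disjoint matchings $E_i^2$ and $E_i^0$ with $E_i^2\cup E_i^0$ an even cycle and both $\overline{G_i\setminus E_i^2}$ and $\overline{G_i\setminus E_i^0}$ 3-edge-colorable. I would retain the same selection rule: for even $i$ choose the cover so that $x_i,y_i\notin V(E_i^2\cup E_i^0)$, and for odd $i$ so that $x_iy_i\in E_i^2$ and $x_i^0x_i,\,y_i^0y_i\in E_i^0$. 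This produces, exactly as in Figures~\ref{F2}--\ref{F5}, the external path-ends $u_i^0x_i,u_i^1x_i,u_i^2y_i,u_i^3y_i$ in $\overline{G_i\setminus E_i^2}$ and $u_i^4x_i,\ldots,u_i^7y_i$ in $\overline{G_i\setminus E_i^0}$ for even $i$, and the edges $u_i^0u_i^1$ in $\overline{G_i\setminus E_i^0}$ together with $u_i^2u_i^3,u_i^4u_i^5$ in $\overline{G_i\setminus E_i^2}$ for odd $i$.

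Next I would write down global matchings $E_0$ and $E_2$ of $H$ from the same template, with the union indices shifted so that they remain integers for odd $k$: the internal $v$-linking unions now run up to $\tfrac{k-3}{2}$ and $\tfrac{k-1}{2}$ rather than $\tfrac{k-4}{2}$ and $\tfrac{k-2}{2}$. The essential change is at the high end of the chain, since for odd $k$ the final block $G_{k-1}$ has \emph{even} index rather than odd. Hence the contribution of $G_{k-1}$ to $E_0$ and $E_2$ must be read off from the four-path ($x_{k-1},y_{k-1}$-avoiding) configuration instead of the three-path configuration, and the terminal connecting edges through $a_{k-1},b_{k-1},c_{k-1}$ and the last inserted vertex $v_{k-3}$ are attached to the corresponding $u_{k-1}^j$ ends. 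The routine verification is then that $E_0\cup E_2$ forms a single even cycle $C$: a finite check that each vertex of $H$ meets exactly one edge of $E_0$ and one of $E_2$ and that the alternating edges close up without producing a short circuit.

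Finally I would reproduce the 2-factor construction. For each odd $i$, 3-edge-colorability of $\overline{G_i\setminus E_i^0}$ furnishes a 2-factor $C_i^0$ avoiding $u_i^0u_i^1$, while that of $\overline{G_i\setminus E_i^2}$ furnishes a 2-factor $C_i^2$ containing $\{u_i^2u_i^3,u_i^4u_i^5\}$; for each even $i$ the colorings give a 2-factor $C_i^2$ containing the paths $u_i^0x_iu_i^1$ and $u_i^2y_iu_i^3$, and a 2-factor $C_i^0$ containing $u_i^4x_iu_i^5$ and $u_i^6y_iu_i^7$. Splicing these local 2-factors with the connecting edges as in Theorem~\ref{thm-2} yields 2-factors of $\overline{H\setminus E_0}$ and $\overline{H\setminus E_2}$ all of whose components are even circuits, so both graphs are 3-edge-colorable and Lemma~\ref{lemma-1} completes the argument.

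The step I expect to be the main obstacle is purely the endpoint bookkeeping. Because the chain now terminates on an even-indexed block, the manner in which the terminal edges at $a_{k-1},b_{k-1},c_{k-1}$ and the vertex $v_{k-3}$ splice into the cycle $C$ and into the two global 2-factors differs from the even-$k$ pattern, and one must confirm that no parity clash at that end creates an odd component or leaves a vertex with the wrong degree. The interior of the chain is identical to the even case, so the genuinely new verification is confined to the final block and its three attaching vertices.
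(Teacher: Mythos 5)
There is a genuine gap, and it sits exactly at the step you dismiss as ``purely endpoint bookkeeping'': for odd $k$ the selection rule you propose to retain cannot work, and no routing of the terminal edges repairs it. Recall that the blocks $H_0,H_1,\ldots,H_{k-1}$ are arranged cyclically: $a_i,b_i$ join the $y_i$-side of $H_i$ to the $x_{i+1}$-side of $H_{i+1}$, indices modulo $k$ (in particular $a_{k-1}$ is adjacent to $y_{k-1}^0$ and $x_0^0$). A block with the edge configuration ($x_iy_i\in E_i^2$, $x_i^0x_i,y_i^0y_i\in E_i^0$) contributes to $E_0\cup E_2$ a path with two loose ends $x_i^0,y_i^0$, which can only be completed by the external edges $x_i^0a_{i-1}$ and $y_i^0a_i$; a block with the avoiding configuration ($x_i,y_i\notin V(E_i^2\cup E_i^0)$) contributes closed circuits only, so no edge of $E_0\cup E_2$ may touch $x_i^0,x_i^1,y_i^0,y_i^1$. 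Under your rule (even avoids, odd contains) the two \emph{consecutive} blocks $H_{k-1}$ and $H_0$ are both avoiding when $k$ is odd. Then $a_{k-1}$, $b_{k-1}$, and hence $c_{k-1}$ have no admissible cycle edges and are forced off $C$; each remaining $a_i$ ($0\le i\le k-2$) is forced to use exactly its loose-end edge together with $a_ic_i$; and the spine edges through the $c_i$'s and $v_j$'s are then forced as well. The unique circuit this produces, namely $a_0,x_1^0,[H_1],y_1^0,a_1,c_1,v_0,c_2,a_2,x_3^0,\ldots,y_{k-2}^0,a_{k-2},c_{k-2},v_{k-4},v_{k-3},c_0,a_0$, has \emph{odd} length: each block path from $x_i^0$ to $y_i^0$ has odd length (it is an even circuit of $G_i$ minus the three edges at $x_i,y_i$), and counting connector edges gives total parity $k-2\equiv 1\pmod 2$. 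Concretely, for $k=5$ the circuit consists of two odd block paths plus the $13$ edges $a_0x_1^0,y_1^0a_1,a_1c_1,c_1v_0,v_0c_2,c_2a_2,a_2x_3^0,y_3^0a_3,a_3c_3,c_3v_1,v_1v_2,v_2c_0,c_0a_0$. An odd circuit cannot be split into two disjoint matchings, so the $E_0,E_2$ you need for Lemma~\ref{lemma-1} do not exist under your rule. (Your remark that the terminal edges attach ``to the corresponding $u_{k-1}^j$ ends'' also conflates the two halves of the argument: the $u_{k-1}^j$ are vertices of the suppressed graphs used for the 3-edge-colorings, not attachment points for $C$; with $H_{k-1}$ avoiding, $C$ has no attachment points there at all.)

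The paper's proof of Theorem~\ref{thm-3} changes the selection rule rather than the endpoint bookkeeping: for odd $k$ it gives the \emph{edge} configuration to $i=0$ as well as to all odd $i$ ($x_iy_i\in E_i^2$, $x_i^0x_i,y_i^0y_i\in E_i^0$ for $i=0$ or $i$ odd), keeping the avoiding configuration only for even $i\neq 0$. Now both $H_0$ and $H_1$ have loose ends; the alternation defect created by two consecutive edge-type blocks is absorbed at $a_0$, which joins $x_1^0$ to $y_0^0$ directly ($a_0x_1^0\in E_0$, $a_0y_0^0\in E_2$) while $c_0$ is left off the cycle entirely, and $a_{k-1},c_{k-1}$ hook block $0$'s other loose end into the spine ($a_{k-1}x_0^0\in E_2$, $a_{k-1}c_{k-1}\in E_0$, $v_{k-3}c_{k-1}\in E_2$). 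This restores even parity of the circuit, and the 2-factor argument is rerun with $G_0$ treated like an odd block (using $C_0^2$, $C_0^0$ of the edge type) and with $c_0,b_0,b_{k-1}$ handled separately in $\overline{H\setminus E_0}$ and $\overline{H\setminus E_2}$. This asymmetric treatment of $G_0$ is the essential new idea of the odd case, and it is precisely what your proposal is missing.
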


\begin{proof} By $H\in M_{0, 1, 2, \ldots, k-2, k-1}$, assume $H=\{G; G_0, G_1, \ldots,G_{k-2}, G_{k-1}\}$.
Since $G_{i}$ has a Fulkerson-cover, for each $i=0,1,\ldots,k-1$, suppose that $\{M_i^1, M_i^2, M_i^3 $ $, M_i^4, M_i^5, M_i^6\}$ is the Fulkerson-cover of $G_{i}$. Let $E_i^{2}$ be the set of edges covered twice by $\{M_i^1, M_i^2, M_i^3 \}$ and $E_i^{0}$ be the set of edges not covered by $\{M_i^1, M_i^2, M_i^3 \}$. Now $E_i^{2} \cup E_i^{0}$ is an even cycle, and $\overline{G_i\setminus E_i^{2}}$ and $\overline{G_i\setminus E_i^{0}}$ can be colored by three colors. Then $E_i^{2}$ and $E_i^{0}$ are the desired disjoint matchings of $G_i$ as in Lemma~\ref{lemma-1}. By choosing three perfect matchings of $G_{i}$, for each $i=0,1,\ldots,k-1$, we can obtain two desired disjoint matchings $E_i^{2}$ and $E_i^{0}$ such that $x_{i}y_i\in E_i^{2}\cup E_i^{0}$ or $x_{i}, y_i\not\in V(E_i^{2} \cup E_i^{0})$. If $i$ is even and $i\neq 0$, three perfect matchings of $G_i$ are chosen such that $x_{i}, y_i\not\in V(E_i^{2} \cup E_i^{0})$. If $i$ is odd or $i=0$, three perfect matchings of $G_i$ are chosen such that $x_{i}y_i\in E_i^{2}\cup E_i^{0}$. Without loss of generality, assume that $x_iy_i\in E_i^{2}$ and $x_i^0x_i, y_i^0y_i\in E_i^{0}$ for $i=0$ or $i$ is odd.

Let $$E_0=(E_0^2-x_0y_0)\cup (E_1^0-\{x_1^0x_1, y_1^0y_1\})\cup \bigcup\limits_{i=2}^{k-1}(E_i^2-{x_iy_i})\cup \{y_1^0a_1, x_1^0a_0, c_1v_0\}$$ $$\cup \bigcup\limits_{i=2}^{k-1}a_ic_i\cup \bigcup\limits_{j=0}^{\frac{k-5}{2}}v_{2j+1}v_{2j+2}$$
and $$E_2=(E_1^2-{x_1y_1})\cup \bigcup\limits_{i=2}^{k-1} (E_i^0-\{x_ix_i^0, y_iy_i^0\})\cup(E_0^0-\{x_0x_0^0, y_0y_0^0\})$$ $$\cup \{a_1c_1\}\cup\bigcup\limits_{i=1}^{\frac{k-1}{2}}\{a_{2i}x_{2i+1}^0, a_{2i+1}y_{2i+1}^0\}\cup\bigcup\limits_{i=2}^{k-1}\{ v_{i-2}c_i\}. $$(See Figure~\ref{F8}). Clearly, $E_0\cup E_2$ is an even cycle $C$.

\begin{figure}[ht]
\begin{center}
\vskip-0.4cm
\includegraphics[scale=0.3]{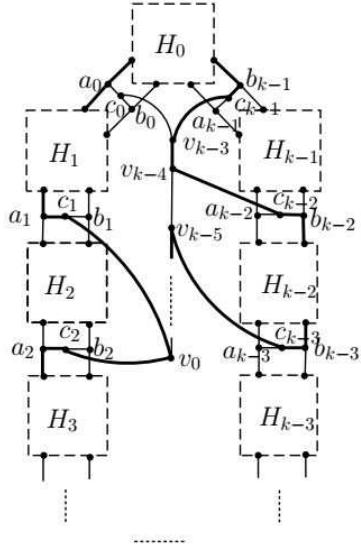}
\vskip-10cm
\end{center}
\caption{$\{G; G_0, G_1, \ldots,G_{k-2}, G_{k-1}\}$}\label{F8}
\end{figure}

 If $i$ is odd or $i=0$, by $x_iy_i\in E_i^{2}$, there exists a maximal path containing only 2-degree vertices as inter vertices in the graph $G_i\backslash E_i^0$, say $u_i^0\ldots y_ix_i\ldots u_i^1$, which corresponds to an edge $u_i^0u_i^1$ in the graph $\overline{G_i\setminus E_i^0}$ (see Figure~\ref{F2}); By $x_i^0x_i, y_i^0y_i\in E_i^{0}$, there exists two distinct maximal path containing only 2-degree vertices as inter vertices in the graph $G_i\setminus E_i^2$, say $u_i^2\ldots x_i^0x_ix_i^1\ldots u_i^3$ and $u_i^5\ldots y_i^0y_iy_i^1\ldots u_i^4$ which correspond to the edges $u_i^2u_i^3$ and $u_i^4u_i^5$ respectively in the graph $\overline{G_i\setminus E_i^2}$ (see Figure~\ref{F3}).

 If $i$ is even and $i\neq 0$, since $x_{i}, y_i\not\in V(E_i^{2})$, there exist four maximal paths containing only 2-degree vertices as inter vertices in the graph $G_i\setminus E_i^2$, say $u_i^0\ldots x_i^1x_i$ (maybe $u_i^0=x_i^1$), $u_i^1\ldots x_i^0x_i$ (maybe $u_i^1=x_i^0$), $u_i^2\ldots y_i^1y_i$ (maybe $u_i^2=y_i^1$) and $u_i^3\ldots y_i^0y_i$ (maybe $u_i^3=y_i^0$), which correspond to the edges $u_i^0x_i$, $u_i^1x_i$, $u_i^2y_i$ and $u_i^3y_i$, respectively, in the graph $\overline{G_i\setminus E_i^2}$. (See Figure~\ref{F4}).
Similarly, by $x_{i}, y_i\not\in V(E_i^{0})$, there exist four maximal paths containing only 2-degree vertices as inter vertices in the graph $G_i\setminus E_i^0$, say $u_i^4\ldots x_i^1x_i$ (maybe $u_i^4=x_i^1$), $u_i^5\ldots x_i^0x_i$ (maybe $u_i^5=x_i^0$), $u_i^6\ldots y_i^1y_i$ (maybe $u_i^6=y_i^1$) and $u_i^7\ldots y_i^0y_i$ which correspond to the edges $u_i^4x_i$, $u_i^5x_i$, $u_i^6y_i$ and $u_i^7y_i$,respectively, in the graph $\overline{G_i\setminus E_i^0}$.

\vskip0.1cm

 If $k=1$, then $H=G_0$ which has a Fulkerson-cover.

 If $k\geq 2$, we will prove $\overline{H\setminus E_0}$ and $\overline{H\setminus E_2}$ are 3-edge colorable in the following.

From the construction of $H$, one has that $\overline{H\setminus E_0}$ (see Figure~\ref{F9}) is
$$
\begin{array}{l}
(\overline{G_0\setminus E_0^2}-\{u_0^2u_0^3, u_0^4u_0^5\})\cup(\overline{G_1\setminus E_1^0}-u_1^0u_1^1)\cup Q_0\cup Q_1
\cup\bigcup_{j=1}^{\frac{k-1}{2}}\{u_{2j}^2b_{2j}, u_{2j+1}^3b_{2j}, u_{2j+1}^2u_{2j}^3\}\\
\cup\bigcup\limits_{j=1}^{\frac{k-3}{2}}\{u_{2j+1}^4b_{2j+1}, u_{2j+1}^5u_{2j+2}^1, u_{2j+2}^0b_{2j+1}, b_{2j}b_{2j+1}\}
\cup\{u_0^5c_0, u_0^4b_0, b_0c_0, b_{k-1}c_0, u_1^1b_0, u_1^0b_1, u_2^0b_1, u_2^1b_1\}.
\end{array}$$
Where, $Q_0=\bigcup\limits_{j=1}^{\frac{k-1}{2}}(\overline{G_{2j}\setminus E_{2j}^2}-\{x_{2j}, y_{2j}\})$ and
$Q_1=\bigcup\limits_{j=1}^{\frac{k-3}{2}}(\overline{G_{2j+1}\setminus E_{2j+1}^2}-\{u_{2j+1}^2u_{2j+1}^3, u_{2j+1}^4u_{2j+1}^5\})$

And $\overline{H\setminus E_2}$ (see Figure~\ref{F9}) is
$$\begin{array}{l}
(\overline{G_1\setminus E_1^2}-\{u_1^2u_1^3, u_1^4u_1^5\})\cup(\overline{G_0\setminus E_0^0}-u_0^0u_0^1)\\
\cup\bigcup\limits_{j=1}^{\frac{k-1}{2}}(\overline{G_{2j}\setminus E_{2j}^0}-\{x_{2j}, y_{2j}\})\cup\bigcup\limits_{j=1}^{\frac{k-3}{2}}(\overline{G_{2j+1}\setminus E_{2j+1}^0}-\{u_{2j+1}^0u_{2j+1}^1\})\\
\cup\bigcup_{j=1}^{\frac{k-1}{2}}\{u_{2j}^6b_{2j}, u_{2j+1}^1b_{2j}, u_{2j}^7b_{2j}\}\cup\bigcup\limits_{j=1}^{\frac{k-3}{2}}\{u_{2j+1}^0b_{2j+1}, u_{2j+2}^4b_{2j+1}, u_{2j+2}^5b_{2j+1}\}\\
\cup\{u_1^5u_2^5, u_1^4b_1, u_2^4b_1, u_1^2c_0, u_1^3b_0, b_0c_0, u_0^0b_0, c_0b_1\}.
\end{array}$$

If $i$ is odd or $i=0$, because $\overline{G_i\setminus E_i^0}$ is 3-edge colorable, there exists a 2-factor $C_i^0$ such that each component is an even circuit and $u_i^0u_i^1$ is not in the 2-factor $C_i^0$. Because $\overline{G_i\setminus E_i^2}$ is 3-edge colorable, there exists a 2-factor $C_i^2$ such each component is an even circuit and $u_i^2u_i^3, u_i^4u_i^5$ are in the 2-factor $C_i^2$.

If $i$ is even and $i\neq 0$, because $\overline{G_i\setminus E_i^2}$ is 3-edge colorable, there exists a 2-factor $C_i^2$ such each component is an even circuit and two paths with length two $u_i^0x_iu_i^1$ and $u_i^2y_iu_i^3$ are in the 2-factor $C_i^2$. Similarly, because $\overline{G_i\setminus E_i^0}$ is 3-edge colorable, there exists a 2-factor $C_i^0$ such each component is an even circuit and $u_i^4x_iu_i^5, u_i^6y_iu_i^7$ are in the 2-factor $C_i^0$.

Then $\overline{H\setminus E_0}$ (see Figure~\ref{F9}) has a 2-factor:
$$\begin{array}{l}
(C_0^2-\{u_0^4u_0^5, u_0^2u_0^3\})\cup C_1^0\cup\bigcup\limits_{j=1}^{\frac{k-3}{2}}(C_{2j+1}^2-\{u_{2j+1}^2u_{2j+1}^3, u_{2j+1}^4u_{2j+1}^5\})\\
\cup \bigcup\limits_{j=1}^{\frac{k-1}{2}}(C_{2j}^2-\{x_{2j}, y_{2j}\})\cup\bigcup\limits_{j=1}^{\frac{k-1}{2}}\{b_{2j}u_{2j+1}^3, b_{2j}u_{2j}^2, u_{2j}^3u_{2j+1}^2\}\cup \bigcup\limits_{j=1}^{\frac{k-3}{2}}\{b_{2j+1}u_{2j+1}^4,\\
b_{2j+1}u_{2j+2}^0, u_{2j+1}^5u_{2j+2}^1\}\cup \{b_0u_0^4, b_1u_2^0, b_1u_2^1, b_0c_0, u_0^5c_0\}
\end{array}$$ and each component is an even circuit.

\begin{figure}[ht]
\begin{center}
\vskip-0.5cm
\includegraphics[scale=0.55]{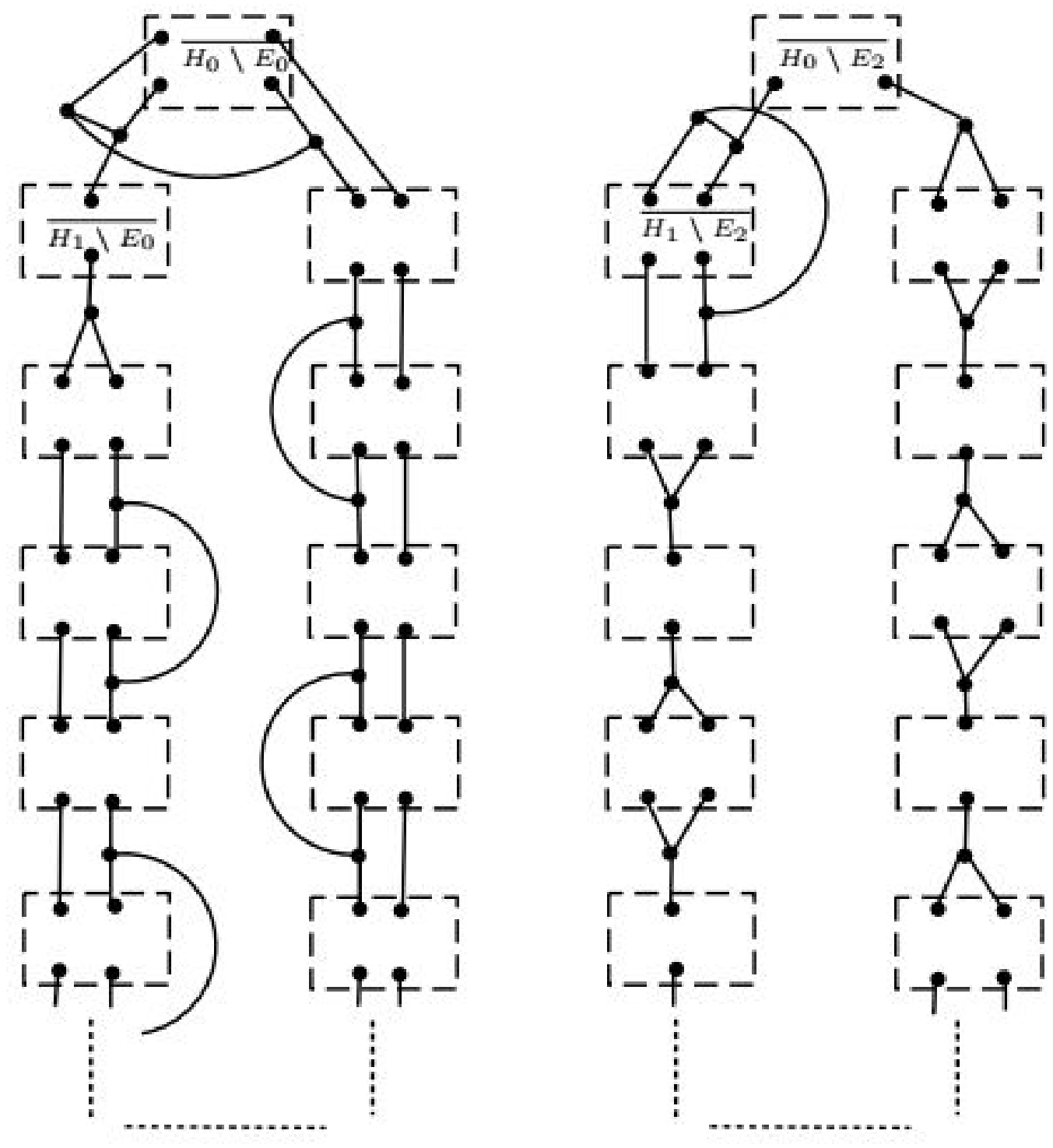}
\vskip-10cm
\end{center}
\caption{$\overline{H\setminus E_0}$ and $\overline{H}\setminus E_2$\\ $H=\{G; G_0, G_1, \ldots, G_{k-2}, G_{k-1}\}$}\label{F9}
\end{figure}

$\overline{H\setminus E_2}$ has a 2-factor:
$$\begin{array}{l}
 C_0^0\cup (C_1^2-\{u_1^2u_1^3, u_1^4u_1^5\})\cup\bigcup\limits_{j=1}^{\frac{k-1}{2}}(C_{2j}^0-\{x_{2j}, y_{2j}\})\cup \bigcup\limits_{j=1}^{\frac{k-3}{2}}C_{2j+1}^0\cup\bigcup_{j=1}^{\frac{k-1}{2}}\{u_{2j}^6b_{2j}, u_{2j}^7b_{2j}\}\\ \cup\bigcup\limits_{j=1}^{\frac{k-3}{2}}\{u_{2j+2}^4b_{2j+1}, u_{2j+2}^5b_{2j+1}\}\cup \{c_0u_1^2, b_0c_0, b_0u_1^3, u_1^5u_2^5, u_1^4b_1, u_2^4b_1\}.\end{array}$$
 And each component is an even circuit.

 So $\overline{H\setminus E_0}$ and $\overline{H\setminus E_2}$ are 3-edge colorable. Therefore $E_0$ and $E_2$ are the desired matchings of Lemma~\ref{lemma-1} and $H=\{G; G_0, G_1,$ $\ldots,G_{k-2},$ $ G_{k-1}\}$ has a Fulkerson-cover.

\end{proof}

From Theorem~\ref{thm-2} and Theorem~\ref{thm-3}, we get the Theorem~\ref{Main thm} that every graph in $M_{0,1,2,\ldots,(k-1)}$ has a Fulkerson-cover.

\section*{Acknowledgments}
This work was supported by the NSFC (No.11371052), the Fundamental Research Funds for the Central Universities (Nos. 2016JBM071, 2016JBZ012), the $111$ Project of China (B16002).

\end{document}